\newcommand{\Mdef}[2]{\newcommand{#1}{\relax \ifmmode #2 \else $#2$\fi}}
\newcommand{\im}{\mathrm{im}}
\newcommand{\sm }{\wedge}
\newcommand{\Hom}{\mathrm{Hom}}
\newcommand{\Ext}{\mathrm{Ext}}
\Mdef{\bhom}{\mathbf{\hat{H}om}}
\Mdef{\Mod}{\mathrm{mod}}
\newcommand{\st}{\; | \;}
\newtheorem{thm}{Theorem}[section]
\newtheorem{lemma}[thm]{Lemma}
\newtheorem{prop}[thm]{Proposition}
\newtheorem{cor}[thm]{Corollary}
\theoremstyle{definition}
\newcommand{\qqed}{\qed \\[1ex]}
\renewenvironment{proof}[1][\hspace*{-.8ex}]{\noindent {\bf Proof #1:\;}}{\qqed}
\Mdef{\PH} {\Phi^H}
\Mdef{\PK} {\Phi^K}
\Mdef{\PL} {\Phi^L}
\Mdef{\PT} {\Phi^{\T}}
\Mdef{\ef}{E{\cF}_+}
\Mdef{\etf}{\widetilde{E}{\cF}}
\Mdef{\eg}{E{G}_+}
\Mdef{\etg}{\tilde{E}{G}}
\Mdef{\infl}{\mathrm{inf}}
\Mdef{\defl}{\mathrm{def}}
\Mdef{\res}{\mathrm{res}}
\Mdef{\ind}{\mathrm{ind}}
\Mdef{\coind}{\mathrm{coind}}
\Mdef{\univ}{\mathcal{U}}
\Mdef{\Fp}{\mathbb{F}_p}
\Mdef{\Zpinfty}{\Z /p^{\infty}}
\Mdef{\Zpadic}{\Z_p^{\wedge}}
\newcommand{\bi}{\begin{itemize}}
\newcommand{\be}{\begin{enumerate}}
\newcommand{\bc}{\begin{center}}
\newcommand{\bd}{\begin{description}}
\newcommand{\ei}{\end{itemize}}
\newcommand{\ee}{\end{enumerate}}
\newcommand{\ec}{\end{center}}
\newcommand{\ed}{\end{description}}
\newcommand{\adjunction}[4]{
\diagram
#1:#2 \rrto<0.7ex> &&
#3  \llto<0.7ex> :#4 
\enddiagram}
\newcommand{\lra}{\longrightarrow}
\newcommand{\lla}{\longleftarrow}
\newcommand{\iso}{\cong}
\Mdef{\we}{\mathbf{we}}
\Mdef{\fib}{\mathbf{fib}}
\Mdef{\cof}{\mathbf{cof}}
\Mdef{\BI}{\mathcal{BI}}
\Mdef{\A}{\mathbb{A}}
\Mdef{\B}{\mathbb{B}}
\Mdef{\C}{\mathbb{C}}
\Mdef{\D}{\mathbb{D}}
\Mdef{\E}{\mathbb{E}}
\Mdef{\T}{\mathbb{T}}
\Mdef{\F}{\mathbb{F}}
\Mdef{\G}{\mathbb{G}}
\Mdef{\I}{\mathbb{I}}
\Mdef{\N}{\mathbb{N}}
\Mdef{\Q}{\mathbb{Q}}
\Mdef{\R}{\mathbb{R}}
\Mdef{\bbS}{\mathbb{S}}
\Mdef{\Z}{\mathbb{Z}}
\Mdef{\bA}{\mathbb{A}}
\Mdef{\bB}{\mathbb{B}}
\Mdef{\bC}{\mathbb{C}}
\Mdef{\bD}{\mathbb{D}}
\Mdef{\bE}{\mathbb{E}}
\Mdef{\bF}{\mathbb{F}}
\Mdef{\bG}{\mathbb{G}}
\Mdef{\bH}{\mathbb{H}}
\Mdef{\bI}{\mathbb{I}}
\Mdef{\bJ}{\mathbb{J}}
\Mdef{\bK}{\mathbb{K}}
\Mdef{\bL}{\mathbb{L}}
\Mdef{\bM}{\mathbb{M}}
\Mdef{\bN}{\mathbb{N}}
\Mdef{\bO}{\mathbb{O}}
\Mdef{\bP}{\mathbb{P}}
\Mdef{\bQ}{\mathbb{Q}}
\Mdef{\bR}{\mathbb{R}}
\Mdef{\bS}{\mathbb{S}}
\Mdef{\bT}{\mathbb{T}}
\Mdef{\bU}{\mathbb{U}}
\Mdef{\bV}{\mathbb{V}}
\Mdef{\bW}{\mathbb{W}}
\Mdef{\bX}{\mathbb{X}}
\Mdef{\bY}{\mathbb{Y}}
\Mdef{\bZ}{\mathbb{Z}}
\Mdef{\cA}{\mathcal{A}}
\Mdef{\cB}{\mathcal{B}}
\Mdef{\cC}{\mathcal{C}}
\Mdef{\mcD}{\mathcal{D}} % Something funny about \cD.
\Mdef{\cE}{\mathcal{E}}
\Mdef{\cF}{\mathcal{F}}
\Mdef{\cG}{\mathcal{G}}
\Mdef{\mcH}{\mathcal{H}} % There's something funny about \cH: it 
\Mdef{\cI}{\mathcal{I}}
\Mdef{\cJ}{\mathcal{J}}
\Mdef{\cK}{\mathcal{K}}
\Mdef{\mcL}{\mathcal{L}}% There's something funny about \cL: it 
\Mdef{\cM}{\mathcal{M}}
\Mdef{\cN}{\mathcal{N}}
\Mdef{\cO}{\mathcal{O}}
\Mdef{\cP}{\mathcal{P}}
\Mdef{\cQ}{\mathcal{Q}}
\Mdef{\mcR}{\mathcal{R}}% There's something funny about \cR: it 
\Mdef{\cS}{\mathcal{S}}
\Mdef{\cT}{\mathcal{T}}
\Mdef{\cU}{\mathcal{U}}
\Mdef{\cV}{\mathcal{V}}
\Mdef{\cW}{\mathcal{W}}
\Mdef{\cX}{\mathcal{X}}
\Mdef{\cY}{\mathcal{Y}}
\Mdef{\cZ}{\mathcal{Z}}
\Mdef{\At}{\tilde{A}}
\Mdef{\Bt}{\tilde{B}}
\Mdef{\Ct}{\tilde{C}}
\Mdef{\Et}{\tilde{E}}
\Mdef{\Ht}{\tilde{H}}
\Mdef{\Kt}{\tilde{K}}
\Mdef{\Lt}{\tilde{L}}
\Mdef{\Mt}{\tilde{M}}
\Mdef{\Nt}{\tilde{N}}
\Mdef{\Pt}{\tilde{P}}
\Mdef{\tA}{\tilde{A}}
\Mdef{\tB}{\tilde{B}}
\Mdef{\tC}{\tilde{C}}
\Mdef{\tE}{\tilde{E}}
\Mdef{\tH}{\tilde{H}}
\Mdef{\tK}{\tilde{K}}
\Mdef{\tL}{\tilde{L}}
\Mdef{\tM}{\tilde{M}}
\Mdef{\tN}{\tilde{N}}
\Mdef{\tP}{\tilde{P}}
\Mdef{\ft}{\tilde{f}}
\Mdef{\xt}{\tilde{x}}
\Mdef{\yt}{\tilde{y}}
\Mdef{\Ab}{\overline{A}}
\Mdef{\Bb}{\overline{B}}
\Mdef{\Cb}{\overline{C}}
\Mdef{\Db}{\overline{D}}
\Mdef{\Eb}{\overline{E}}
\Mdef{\Fb}{\overline{F}}
\Mdef{\Gb}{\overline{G}}
\Mdef{\Hb}{\overline{H}}
\Mdef{\Ib}{\overline{I}}
\Mdef{\Jb}{\overline{J}}
\Mdef{\Kb}{\overline{K}}
\Mdef{\Lb}{\overline{L}}
\Mdef{\Mb}{\overline{M}}
\Mdef{\Nb}{\overline{N}}
\Mdef{\Ob}{\overline{O}}
\Mdef{\Pb}{\overline{P}}
\Mdef{\Qb}{\overline{Q}}
\Mdef{\Rb}{\overline{R}}
\Mdef{\Sb}{\overline{S}}
\Mdef{\Tb}{\overline{T}}
\Mdef{\Ub}{\overline{U}}
\Mdef{\Vb}{\overline{V}}
\Mdef{\Wb}{\overline{W}}
\Mdef{\Xb}{\overline{X}}
\Mdef{\Yb}{\overline{Y}}
\Mdef{\Zb}{\overline{Z}}
\Mdef{\db}{\overline{d}}
\Mdef{\hb}{\overline{h}}
\Mdef{\qb}{\overline{q}}
\Mdef{\rb}{\overline{r}}
\Mdef{\tb}{\overline{t}}
\Mdef{\ub}{\overline{u}}
\Mdef{\vb}{\overline{v}}
\Mdef{\hc}{\hat{c}}
\Mdef{\he}{\hat{e}}
\Mdef{\hf}{\hat{f}}
\Mdef{\hA}{\hat{A}}
\Mdef{\hH}{\hat{H}}
\Mdef{\hJ}{\hat{J}}
\Mdef{\hM}{\hat{M}}
\Mdef{\hP}{\hat{P}}
\Mdef{\hQ}{\hat{Q}}
\Mdef{\thetab}{\overline{\theta}}
\Mdef{\phib}{\overline{\phi}}
\Mdef{\uA}{\underline{A}}
\Mdef{\uB}{\underline{B}}
\Mdef{\uC}{\underline{C}}
\Mdef{\uD}{\underline{D}}
\Mdef{\bolda}{\mathbf{a}}
\Mdef{\boldb}{\mathbf{b}}
\Mdef{\boldD}{\mathbf{D}}
\Mdef{\fm}{\frak{m}}
\Mdef{\fp}{\frak{p}}
\Mdef{\eps}{\epsilon}
\renewcommand{\Et}{\cE_t}
\newcommand{\M}{\bM}
\newcommand{\modcat}[1]{\mbox{$#1$-mod}}
\newcommand{\torsmodcat}[1]{\mbox{tors-$#1$-mod}}
\newcommand{\cellmodcat}[1]{\mbox{cell-$#1$-mod}}
\newcommand{\modcatG}[1]{\mbox{$#1$-mod-$G$-spectra}}
\newcommand{\cellmodcatG}[1]{\mbox{cell-$#1$-mod-$G$-spectra}}
\newcommand{\modcatW}[1]{\mbox{$#1$-mod-$W$-spectra}}
\newcommand{\cellmodcatW}[1]{\mbox{cell-$#1$-mod-$W$-spectra}}
\newcommand{\cellmodcatQW}[1]{\mbox{cell-$#1[W]$-mod}}
\newcommand{\freeGspectra}{\mbox{free-$G$-spectra}}
\newcommand{\HBN}{H^*(\widetilde{B}N)}
\newcommand{\HBNW}{H^*(\widetilde{B}N)[W]}
\newcommand{\CBN}{C^*(\widetilde{B}N)}
\newcommand{\CBNW}{C^*(\widetilde{B}N)[W]}
\newcommand{\DBNp}{D\widetilde{B}N_+}
\newcommand{\symm}{\mathrm{Symm}}
\begin{document}
\title{ An algebraic model for free rational $G$-spectra.} 
\author{J.~P.~C.~Greenlees}
\address{Department of Pure Mathematics, The Hicks Building, 
Sheffield S3 7RH. UK.}
\email{j.greenlees@sheffield.ac.uk}
\author{B.~Shipley}
\thanks{The first author is grateful for support under EPSRC grant
  number EP/H040692/1. 
  This material is based upon work by the second author supported by the National Science Foundation under Grant No. DMS-1104396.}
\address{Department of Mathematics, Statistics and Computer Science, University of Illinois at
Chicago, 510 SEO m/c 249,
851 S. Morgan Street,
Chicago, IL, 60607-7045, USA}
\email{shipleyb@uic.edu}
\date{}

\subjclass[2010]{55P42, 55P62, 55P91, 55N91}

\begin{abstract}
We show that for any compact Lie group $G$ with identity component $N$
and component group $W=G/N$, the category of free
rational $G$-spectra is equivalent to the category of torsion modules
over the twisted group ring $H^*(BN)[W]$. This gives an algebraic
classification of rational $G$-equivariant cohomology theories on free
$G$-spaces and a practical method for calculating the groups of
natural transformations between them. 
\end{abstract}

\maketitle
\tableofcontents

\section{Introduction}
\subsection{Context}
In algebraic topology, one of the basic invariants of $G$-spaces is an
equivariant cohomology theory, $E_G^*(\cdot )$ (one thinks first of
various types of equivariant $K$-theory and equivariant cobordism, but
also of Bredon and Borel cohomology). Such a cohomology theory  is in particular a contravariant functor to an abelian
category satisfying the Eilenberg-Steenrod and Milnor axioms (which is to say it
is homotopy invariant, is an exact functor, has a Mayer-Vietoris sequence,
and takes sums to products). In general we impose restrictions on
behaviour under suspensions, but  to simplify the situation somewhat, we
restrict the cohomology theory to free $G$-spaces, which makes the
other restrictions unnecessary. A  formal stabilisation process constructs a category (``free
$G$-spectra'')  in which such cohomology
theories are represented by an object $E$ in the sense that for any
based $G$-space $X$ we have 
$E_G^*(X)=[X,E]_G^*$. The category of spectra has the advantage of a much
richer structure, and in particular one can do homotopy theory in it. 
The spectra $E$ correspond to cohomology theories $E_G^*(\cdot)$ and 
spaces of natural transformations are also calculated as homotopy
classes of maps:  
$\mathrm{Nat}(E_G^*(\cdot),F_G^*(\cdot))=[E,F]^G_*. $ 
 
However, the category of free $G$-spectra remains almost as complicated as the
category of free $G$-spaces, so one does not expect to have a complete
global understanding in this generality. 
To simplify things further we assume that the cohomology theories take
values in rational vector spaces, and these are correspondingly represented by free
rational $G$-spectra. We prove in this paper that  this category is simple enough
to have a purely algebraic model. Some special cases are relatively elementary. By
Serre's early work, if $G$ is trivial,  then rational cohomology
theories correspond to graded rational vector spaces, and if $G$ is finite
then rational cohomology theories on free $G$-spaces correspond to graded $\Q G$-modules, but for
infinite compact Lie groups, the situation is more complicated. 

\subsection{Results}
We have previously given a small and concrete model of free rational 
$G$-spectra when $G$ is a connected compact Lie group \cite{gfreeq}. 
The main  result of the present paper extends this to general compact Lie
groups, but perhaps more interesting is  the new method 
(essentially that of \cite{tnq3}), which involves fewer equivalences and better 
respects multiplicative structures. Furthermore, some readers may find
it helpful to see the method of \cite{tnq3} implemented in the present 
simple context. 

The case of free $G$-spectra has the attraction that it is 
rather easy to describe both the homotopy category of free $G$-spectra
and also the algebraic model. The homotopy
category coincides with the category of rational cohomology 
theories on free $G$-spaces; better still, on free $G$-spaces an equivariant
cohomology theory is the same as one in the naive sense 
(i.e., a contravariant functor satisfying the Eilenberg-Steenrod axioms and the wedge
axiom). 

To describe the algebraic model, we suppose $G$ has identity component
$N$ and component group $W=G/N$. Note that $W$ acts on the 
polynomial ring $H^*(BN)$ by ring isomorphisms, and we write 
$\HBN$ to advertise the $W$ action. We may then form 
the twisted group ring $\HBNW$. 
A module over this ring is said to be a torsion module if it is torsion
as a module over the polynomial ring $\HBN$. The algebraic model consists 
of differential graded torsion  modules over $\HBNW$.

\begin{thm}
\label{thm:main}
For any compact Lie group  $G$, with identity component $N$ and
component group $W=G/N$, there is a Quillen equivalence
$$\freeGspectra /\Q \simeq \torsmodcat{\HBNW}  $$
of model categories. In particular their derived  categories are 
equivalent
$$\mbox{$\Q$-cohomology-theories-on-free-$G$-spaces}=Ho(\freeGspectra /\Q) \simeq D(\torsmodcat{\HBNW} ) $$
as triangulated categories. 
\end{thm}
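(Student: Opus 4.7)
The plan is to adapt the strategy of \cite{tnq3} and extend the connected case from \cite{gfreeq} so as to incorporate the finite group $W$. The structural observation driving the proof is that the extension $1\to N\to G\to W\to 1$ presents $EG$ as a universal free $N$-space carrying a residual $W$-action, so that $EG/N$ is a model $\widetilde{B}N$ for $BN$ equipped with its $W$-action. A free rational $G$-spectrum is thus the same data as a free rational $N$-spectrum with a compatible $W$-action, which on the algebraic side becomes a torsion $\HBN$-module together with compatible $W$-action, i.e., a torsion $\HBNW$-module.

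I would carry this out in three main steps. First, cellularize rational $G$-spectra at the generator $EG_+$ and apply Morita theory: the endomorphism ring spectrum of $EG_+$ in free $G$-spectra is a commutative rational ring spectrum modelling $\CBNW$, where the $W$-factor arises from packaging the residual action into the multiplicative structure. This produces a Quillen equivalence
\[
\freeGspectra/\Q \;\simeq\; \cellmodcat{\CBNW},
\]
where the cellular structure is generated by the image of $EG_+$. Second, replace the topological ring by its cohomology via rational intrinsic formality: because $H^*(BN;\Q)$ is a polynomial algebra, it is intrinsically formal as a DGA, and in characteristic zero this formality lifts to a $W$-equivariant statement since the relevant obstruction groups are finite-group cohomology with $\Q$-coefficients and hence vanish. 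This yields
\[
\cellmodcat{\CBNW} \;\simeq\; \cellmodcat{\HBNW}.
\]
Third, identify the cellularization with the torsion subcategory: the generator on the algebraic side corresponds to a bounded complex built from the residue field $\Q$ of $\HBN$, whose thick closure (or rather the cells it generates in the module category) consists precisely of the $\HBN$-torsion modules, giving $\cellmodcat{\HBNW} \simeq \torsmodcat{\HBNW}$.

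The main obstacle is the combined multiplicative and equivariant bookkeeping needed to obtain a genuine twisted group ring on the nose, rather than a weaker homotopical version, and to carry the $W$-action coherently through the formality step. The method of \cite{tnq3} is designed precisely to avoid long zigzags by working in a rigid commutative model for ring spectra, so the key technical work is to verify that $\DBNp$ admits a strictly commutative rational model in which $W$ acts by ring automorphisms, and that the subsequent intrinsic-formality replacement can be realized $W$-equivariantly at the level of CDGAs. Once these rigidity statements are secured, the translation between cellular and torsion model structures follows essentially as in the connected case \cite{gfreeq}, and the equivalence of triangulated derived categories is a formal consequence of the Quillen equivalence.
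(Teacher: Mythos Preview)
Your overall architecture (cellularize, pass to algebra, use formality, identify cellular with torsion) matches the paper's, and your treatment of Steps~2 and~3 is essentially correct: equivariant formality via Maschke's theorem and the identification of $\Q W$-cellular objects with torsion modules are exactly what the paper does. The gap is in Step~1.

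The object $EG_+$ is not compact in free $G$-spectra when $\dim G>0$, so Morita theory does not apply to it directly; the compact generator is $G_+$. More seriously, the endomorphism ring spectrum of $EG_+$ is $F(EG_+,EG_+)\simeq DEG_+$, whose $G$-homotopy is $H^*(BG)\cong H^*(BN)^W$, the \emph{invariants}, not the twisted group ring $\HBNW$. (And $\CBNW$ is not commutative unless $W$ acts trivially, so it cannot arise as the endomorphism ring of a single object in a symmetric monoidal way as you describe.) There is no single-generator Morita step that produces $\CBNW$ on the nose.

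The paper resolves this by keeping the commutative ring and the $W$-action separate for as long as possible. It first passes to modules over the commutative ring $G$-spectrum $DEG_+$, then applies $N$-fixed points to obtain modules over the commutative ring $W$-spectrum $\DBNp=(DEG_+)^N$ (an Eilenberg--Moore argument shows this is an equivalence on cells). Only after transporting to algebra via \cite{s-alg} does one use the isomorphism of categories
\[
\text{$\CBN$-modules in $\Q[W]$-modules}\;\cong\;\text{$\CBNW$-modules},
\]
which is where the twisted group ring finally appears. Formality is then proved for the \emph{commutative} $\Q W$-algebra $\CBN$ (using Maschke to split $V\hookrightarrow Z\CBN$), not for the noncommutative $\CBNW$. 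Your proposal would go through if you replaced the single Morita step by this two-stage passage (change of rings to $DEG_+$, then $N$-fixed points), and correspondingly did formality on $\CBN$ as a $W$-equivariant CDGA rather than on $\CBNW$.
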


Notice that the algebraic model does not detect the fact that the 
extension 
$$1\lra N \lra G\lra W\lra 1$$
need not be split. For example both free $O(2)$-spectra and free
$Pin (2)$-spectra are equivalent to torsion modules over 
the twisted polynomial ring $\Q [c][W]$ where $W$ is a group of 
order 2 acting to negate $c$. This should not be surprising, since
the 2:1 map $Pin(2) \lra O(2)$ induces a rational equivalence on 
categories of free spectra.

\subsection{Conventions}
Certain conventions are in force throughout the paper. The most important 
is that {\em everything is rational}: henceforth all  spectra  and homology 
theories are rationalized without
comment.  For example, the category of free rational $G$-spectra will now 
be denoted `$\freeGspectra$'.  We also use the standard conventions that
`DG' abbreviates `differential graded'.
% and that `subgroup' means `closed subgroup'. 
We focus on homological (lower) degrees, with differentials reducing degrees;
for clarity, cohomological (upper) degrees are called {\em codegrees} and 
are converted to degrees  by negation in the usual way.
Finally, we write $H^*(X)$ for the unreduced cohomology of a space $X$
with rational coefficients.

\section{The proof}

\subsection{Organization of the paper}
Most of the rest of the paper is devoted to establishing the following
sequence of Quillen equivalences, several of which are themselves
zig-zags.  We will repeatedly use cellularization (right localization)
to focus attention on the free part of the model, and the
Cellularization Principle of \cite{GScellprinciple} (quoted in the
relevant special case in Appendix A).
The cellularizations are all with respect to the images of
the cell $G_+$; using the Adams spectral sequence of Section
\ref{sec:ASS}, these are recognized by their homotopy, and hence behave
as expected under the functors between the different models below. 
\begin{multline*}
\freeGspectra 
\stackrel{(1)} \simeq \cellmodcatG{DEG_+}
\stackrel{(2)} \simeq \cellmodcatW{\DBNp} \\
\stackrel{(3)} \simeq \cellmodcatQW{\CBN}
\stackrel{(4)} \simeq \cellmodcat{\HBNW}
\stackrel{(5)} \simeq \torsmodcat{\HBNW} 
\end{multline*}

To start with, we use the $G_+$-cellularization of spectra as a model
for free $G$-spectra. Now,  
Equivalence (1) is obtained from the change of rings adjunction
arising from the map $\bS \lra DEG_+$ of ring $G$-spectra 
(where $DEG_+=F(EG_+, \bS)$ is the functional dual of $EG_+$) by 
cellularizing with respect to $G_+$. This is described in Section \ref{sec:DEGmod}. 

Equivalence (2) reduces to a finite group of equivariance. It is
obtained by passage to $N$-fixed points; $\DBNp$ is the $W$-spectrum 
$(DEG_+)^N$, with the tilde included as a reminder that $W$ is
acting. This is  a form of Eilenberg-Moore equivalence and is discussed in 
Sections \ref{sec:Nfixed} and \ref{sec:EilenbergMoore} (see
\cite{GSmodulefps} for a more general discussion).  

Equivalence (3) is the big step from topology to algebra and this step is described in Section 
\ref{sec:topalg} calling on the results of \cite{s-alg}. In this case the ring
$W$-spectrum $\DBNp$ is an algebra over the rational Eilenberg-MacLane
spectrum, and hence equivalent to a $\Q W$-algebra, which we call
$\CBN$ because it is a DGA with cohomology $\HBN$. Since
we are working over the rationals, this may be taken to be commutative.
A $\CBN$-module in $\Q W$-modules is the same as
a module over the twisted group ring $\CBNW$, and we will use language
from the latter point of view. 

Equivalence (4) moves from a differential graded algebra to an
ordinary graded ring by a little formality argument described in 
Section \ref{sec:formality}. It is basically the
usual argument that commutative polynomial rings are intrinsically
formal, but a little care is needed to deal with the representations. 

Equivalence (5) is a change of model which means that cellularization
at the model category level is replaced by the use of a more
economical underlying category. This is described in Section
\ref{sec:algmodels}. 

\subsection{Relationship to other results}

We should comment on the relationship between the strategy implemented
here and that used for free spectra in  \cite{gfreeq}. Both strategies
start with a category of $G$-spectra and end with the same purely algebraic
category, and the connection in both relies on finding an intermediate
category which is visibly rigid in the sense that it is determined
by its homotopy category (the archetype of this is the category 
of modules over a commutative DGA with polynomial cohomology). 
With some additional effort, the methods of \cite{gfreeq} can be
applied to prove the main equivalence of Theorem \ref{thm:main}. 

The difference comes in the route taken. Roughly speaking,
the strategy in \cite{gfreeq} is to move to non-equivariant spectra as
soon as possible, whereas that adopted here is to keep working in 
the ambient category of $G$-spectra for as long as possible. 
The present method appears to have several advantages. It uses fewer
steps, and (although we do not pursue it here) the monoidal structures 
are visible throughout. 

We present the argument as briefly as possible, so as to highlight the line
of argument. The technical ingredients can be found in
\cite{GScellprinciple} and \cite{GSmodulefps} (a more condensed
account all in one place can be found in  \cite{tnq3}). 

\section{Modules over $DEG_+$.}
\label{sec:DEGmod}

To start we need a model for free $G$-spectra; there are several 
Quillen equivalent alternatives (see \cite[Section 3]{gfreeq} for
discussion).  For definiteness, we start with orthogonal $G$-spectra
\cite{MM}, and use the $G_+$-cellularization
of the category of $\bS$-module $G$-spectra, where $\bS$ is 
a strictly commutative model for the rationalized sphere spectrum.
Next, we explain that since $EG$ is a $G$-space, we have a strictly
cocommutative diagonal $EG\lra EG\times EG$ and hence the functional
dual
$DEG_+=F(EG_+,\bS)$ becomes a commutative ring $G$-spectrum. The
completion map 
$$\bS =F(\bS, \bS )\lra F(EG_+, \bS)=DEG_+$$
then gives a map of ring spectra. Accordingly we have a Quillen
adjunction given by extension and restriction of scalars, with counit
given by the action map
$$DEG_+\sm X \lra X$$
 for $DEG_+$-modules $X$ and unit
$$Y\lra DEG_+\sm Y. $$
Noting that the $\bS$-module $G_+$ is taken to $DEG_+ \sm G_+\simeq
G_+$, we continue to write $G_+$ for the image cell. 
We note that both the derived unit and counit are non-equivariant equivalences
and hence $G_+$-equivalences. It follows from the Cellularization Principle~\ref{prop.cell}
%~\cite[App. A]{tnq3}
that we have a Quillen equivalence of cellularizations
$$\freeGspectra=\cellmodcatG{\bS}\simeq \cellmodcatG{DEG_+}. $$

\section{Passage to $N$-fixed points.}
\label{sec:Nfixed}

Now we note that since the identity component $N$ of $G$ is a normal
subgroup, Lewis-May fixed points give a functor from $G$-spectra to 
$W$-spectra. We write
$$\DBNp:=(DEG_+)^N$$
for the image of $DEG_+$. It is a ring $W$-spectrum with underlying 
non-equivariant spectrum $DBN_+$, and we include the tilde in the
notation to emphasize that it will typically have a non-trivial
$W$-action. 

The Lewis-May fixed point functor takes $DEG_+$-module $G$-spectra to 
$\DBNp$-module $W$-spectra, and we denote this functor $\Psi^N$. It
has a left adjoint given by inflation and extension of scalars, and as
usual we suppress the notation for inflation. The adjunction is
discussed at greater length in \cite{GSmodulefps} or \cite[Section
11]{tnq3}. 

Once again we have a Quillen adjunction 
$$\adjunction{DEG_+\sm_{\DBNp}}
{\modcatW{\DBNp}}
{\modcatG{DEG_+}}
{\Psi^N}$$
The Wirthm\"uller   $(G_+)^N\simeq \Sigma^dW_+$
\cite[II.6.3]{LMS(M)} gives the image of the cell $G_+$ as a $W$-spectrum, where $d$
is the dimension of $G$, and the module structure is unique by
Corollary \ref{cor:cellisunique}.
By the Cellularization Principle (Proposition \ref{prop.cell}), 
to see that we get a Quillen equivalence after cellularization,  we
need only check that the unit and counit are derived equivalences on
the cells.

For any $N$-free $G$-space $Y$ the counit is a map
$$DEG_+\sm_{\DBNp} Y^N \simeq DEG_+\sm_{\DBNp} \Sigma^d Y/N 
\lra Y,   $$
and we are interested in the special case $Y=G_+$. 
This counit is an equivalence for any $N$-free $Y$ by the Eilenberg-Moore theorem since $N$ is a
connected group, but we give the complete proof in Section \ref{sec:EilenbergMoore}, since it is
especially simple in the rational case. 
It then follows that the unit
$$W_+ \lra (DEG_+\sm_{\DBNp} W_+)^N\simeq (\Sigma^{-d}G_+)^N$$
is also an equivalence.   By the Cellularization Principle \ref{prop.cell}, we thus have a Quillen equivalence on the cellularizations
$$\cellmodcatG{DEG_+}
\simeq \cellmodcatW{\DBNp}.$$

\section{The Eilenberg-Moore argument}
\label{sec:EilenbergMoore}
We give a self contained argument for the Eilenberg-Moore equivalence in the previous section. 

\begin{prop} For any $N$-free $DEG_+$-module
$G$-spectrum $Y$ the counit  of the fixed point adjunction 
$$DEG_+\sm_{\DBNp} Y^N \simeq DEG_+\sm_{\DBNp} \Sigma^d Y/N 
\lra Y  $$
is a weak equivalence.
\end{prop}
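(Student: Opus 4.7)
The plan is a standard cell-induction argument, reducing the general $N$-free $Y$ to a single generating cell.

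First, both the identity functor and $\Phi(Y) := DEG_+ \sm^L_{\DBNp} Y^N$ are exact triangulated endofunctors on the homotopy category of $N$-free $DEG_+$-module $G$-spectra that preserve arbitrary coproducts. For $\Phi$ the crucial point is that on $N$-free spectra the Adams isomorphism $Y^N \simeq \Sigma^d Y/N$ ensures that Lewis-May $N$-fixed points preserve cofibre sequences and coproducts. The counit is a natural transformation $\Phi \to \mathrm{id}$, so the class of $Y$ for which it is an equivalence is closed under coproducts and triangles, and is therefore a localizing subcategory.

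Second, every $N$-free $DEG_+$-module $G$-spectrum lies in the localizing subcategory generated by $DEG_+ \sm G_+ \simeq G_+$, because any $N$-free $G$-CW spectrum is built from free cells $G_+ \sm S^n$ (and smashing with $DEG_+$ preserves this, since $DEG_+ \sm G_+ \to G_+$ is a non-equivariant equivalence between objects in the free part). Thus it suffices to verify the counit when $Y = G_+$.

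Third, for $Y = G_+$ the Wirthm\"uller isomorphism gives $(G_+)^N \simeq \Sigma^d W_+$ as a $\DBNp$-module, so the counit becomes
$$DEG_+ \sm^L_{\DBNp} \Sigma^d W_+ \lra G_+.$$
Its homotopy is computed via the Adams spectral sequence of Section~\ref{sec:ASS}: on the right $\pi^G_*(G_+) = \Q$ in degree zero, and on the left the rational formality of the ingredients reduces the calculation to a $\Tor$ over the twisted ring $H^*(BN)[W]$. The collapse uses that $H^*(BN)$ is rationally free of rank $|W|$ over $H^*(BG) = H^*(BN)^W$ (because the finite cover $BN \to BG$ has fibre $W$ and we are rational), so higher Tor vanishes and the degree zero piece is identified with $\Q$. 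This matches the right-hand side.

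The main obstacle is step three: although each ingredient is formal over $\Q$, one must track the $W$-action and the compatibility of the ring map $\DBNp \to DEG_+$ with the Wirthm\"uller identification of $(G_+)^N$ with $\Sigma^d W_+$. Given this compatibility, the Tor computation is routine, exploiting the freeness of $H^*(BN)$ over $H^*(BN)^W$, which is precisely where the rational hypothesis and the connectedness of $N$ enter.
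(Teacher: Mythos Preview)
Your cell-induction framework (steps 1 and 2) is sound and matches the paper's setup: the class of $Y$ for which the counit is an equivalence is localizing, and every $N$-free $DEG_+$-module lies in the localizing subcategory generated by $G_+$, so it suffices to check $Y=G_+$. The paper makes the same reduction, though it immediately passes to the $N$-equivariant world (where $W$ disappears) and phrases it as checking $Y=N_+$.

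Step 3 is where the content lies, and here your argument has a genuine gap. First, the Adams spectral sequence of Section~\ref{sec:ASS} is an $\Ext$-based tool computing $[X,Y]^G_*$; it does not compute the homotopy of a relative smash product, so you need a K\"unneth/bar spectral sequence instead. Second, and more seriously, your key algebraic input---that $H^*(BN)$ is free of rank $|W|$ over $H^*(BG)=H^*(BN)^W$---is \emph{false} in general. By Chevalley--Shephard--Todd this freeness holds precisely when $W$ acts on the generating space $QH^*(BN)$ as a pseudo-reflection group, which need not be the case: for instance $G=T^2\rtimes C_3$ with $C_3$ acting by an order-3 rotation gives a counterexample. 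So your ``higher Tor vanishes'' claim cannot stand on that basis. Third, even if the two sides were abstractly isomorphic, you would still owe an argument that the counit \emph{map} induces the isomorphism.

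The paper avoids all three issues by a different device. After reducing to $N$-equivariant spectra, it does not compute the smash product at all. Instead it uses that $H^*(BN)$ is polynomial to realize the Koszul resolution of $\Q$ by a finite tower of $N$-spectra, exhibiting $N_+$ (up to suspension) as built by finitely many cofibre sequences from wedges of $DEN_+$. Since the counit is tautologically an equivalence for $DEN_+$ and the equivalence class is closed under cofibres and wedges, it follows for $N_+$. The relevant algebraic fact is thus that $H^*(BN)$ is polynomial, not anything about freeness over $H^*(BG)$.
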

\begin{proof}
Consider the map 
$$\epsilon: DEG_+\sm_{\DBNp} Y^N \lra Y$$
for arbitrary $DEG_+$-modules $Y$. This is a map of $G$-spectra, and it is a weak
equivalence in our model of free $G$-spectra provided
it is a non-equivariant equivalence. This means that it suffices to
argue purely $N$-equivariantly in showing that the counit is a weak
equivalence. Accordingly it is enough to argue entirely with
$N$-spectra, which we do for the remainder of the proof, so that we
have the $N$-map
$$\epsilon: DEN_+\sm_{\DBNp} Y^N \lra Y. $$

Now note that the class of $Y$ for which the counit is an equivalence is closed under
cofibre sequences, retracts and arbitrary wedges. The map $\epsilon$ is tautologously
an equivalence for the $DEN_+$-module $Y=DEN_+$ itself. It follows
that it suffices to show that $DEN_+$ builds the free cell $N_+$,
since all $N$-free spectra are built from  $N_+$. 

It remains to show that $Y=N_+$ is built from $DEN_+$. The basic idea
is to use the standard Koszul resolution, but the implementation of
the idea is complicated by some dualization. We recall that $R=H^*(BN)$ is a
polynomial ring, so that if we choose polynomial generators $x_1,
\ldots , x_r$  we may form a Koszul complex, and hence an exact
sequence
$$0\lra F_r\lra F_{r-1}\lra \cdots \lra F_1 \lra F_0 \lra \Q\lra 0
. $$
Here $F_s$ is a free module on generators corresponding to $s$-fold 
products of the generators: $F_s=\bigoplus_{|A|=s}x^A R$ (where $A$
runs over subsets of $\{ 1, 2, \ldots , r\}$, and  $x^A$
is simply designed to keep track of the degrees).  Of course, since 
$\pi_*^N(DEN_+)=R$, we can realize the modules with $DEN_+$-module
$N$-spectra  by taking
$$\F_s=\bigvee_{|A|=s} x^ADEN_+, $$
so that 
$$F_s=\pi^N_*(\F_s). $$ 
It is also easy to realize the maps in the exact sequence in
$DEN_+$-modules and one may go further to realize an entire filtered
spectrum in a standard way. 

However it is convenient to do a little more, and  find the
pre-dual. In the algebraic world, $F_s\cong (E_s)^{\vee}$ for a suitable
module $E_s$; indeed we may take $E_s=(F_s)^{\vee}$.  Taking duals
throughout, we have a resolution
$$0\lla E_r\lla E_{r-1}\lla \cdots \lla E_1 \lla E_0 \lla \Q\lla 0
 $$
by injective $H^*(BN)$-modules. 

 In the topological  world $\pi^N_*(N_+)=\Sigma^d \Q$, $\pi^N_*(EN_+)=\Sigma^d R^{\vee}$, and we
may realize $\Sigma^d E_s$ with $\E_s=\bigvee_{|A|=s}x^{-A}EN_+$. In the usual
way we form a tower
$$\diagram
Y_0\dto & Y_1\dto \lto&. \lto & \cdots & Y_{r-1}\dto \lto& 
Y_r\dto \lto &Y_{r+1} \lto \\
%Y_r\dto \lto \\  
%% see comment below
\E_0 &\Sigma^{-1}\E_1&&&\Sigma^{-r+1}\E_{r-1}&\Sigma^{-r}\E_r
\enddiagram$$
so that $Y_0=N_+$ and the composites $\Sigma^{-s+1}\E_{s-1}\lra \Sigma Y_s\lra
\Sigma^{-s+1}\E_s$ are the ($d$-fold suspension of the) maps in
the exact sequence. In more detail, we start with 
$Y_0=N_+$ and find a map $N_+\lra \E_0$ realizing $\Q\lra
E_0$, and take $Y_1$ to be the fibre. Now $\pi^N_*(\Sigma Y_1)=\im (E_0\lra
E_1)$
and we may realize $\pi_*^N(\Sigma Y_1)\lra E_1=\pi^N_*(\E_1)$ by a map 
$\Sigma Y_1 \lra \E_1$.  

Taking functional duals we get the required resolution by
$DEN_+$-modules.  The point of working with pre-duals is that we have a 
diagram of $N$-free spectra. 
Thus, the fact that $\pi^N_*(Y_{r+1})=0$ 
is sufficient to show that 
$Y_{r+1}\simeq *$ \cite[6.1]{gfreeq}.
%$Y_{r}\simeq *$. 
Now we may form
the dual tower, 
$$\diagram
Y^0\dto & Y^1\dto \lto&. \lto & \cdots & Y^{r-1}\dto \lto& 
Y^r\dto \lto &Y^{r+1} \lto \\
%Y^r\dto \lto \\  
\Sigma^{1-0} \E_0 &\Sigma^{1-1}\E_1&&&\Sigma^{1-r+1}\E_{r-1}&\Sigma^{1-r}\E_r
\enddiagram$$
defined by the cofibre sequences
$$Y_s\lra Y_0\lra Y^s. $$
Here we start with $Y^0\simeq *$ and build up 
%$Y^r\simeq Y_0=N_+$ by 
$Y^{r+1}\simeq Y_0=N_+$ by 
%the $r$ cofibre sequences  
the $r+1$ cofibre sequences 
$$\Sigma^{-s} \E_s\lra Y^{s+1}\lra Y^s. $$
Dualizing,  we have a corresponding construction
 with $DY^0\simeq *$ and build up 
 %$DY^r\simeq DY_0\simeq N_+$ by 
  $DY^{r+1}\simeq DY_0\simeq 
   DN_{+} \simeq \Sigma^{-d} N_+$ (using the Wirthm\"uller isomorphism for the last equivalence) by 
%the $r$ cofibre sequences  
the $r+1$ cofibre sequences  
$$\Sigma^{s} D\E_s\lla DY^{s+1}\lla DY^s. $$

This shows that $N_+$ is finitely built from $DEN_+$, which completes
the proof. 
%The counit is an equivalence for $DY^0\simeq *$ and each of the terms
%$D\E_s$ (since they are wedges of $DEN_+$), 
%and hence also for $DY^{r+1}\simeq \Sigma^{-d}N_+$ as required.
% Since all $N$-free $N$-spectra
%are built from $N_+$ the counit is an equivalence for all $N$-free
%$N$-spectra. 
%
% Now we may relax the condition that $N=G$. Looking at the above
% account we still take $R=H^*(BN)$. We now form the Koszul complex
% as a module over $\Q [W]$. This requires a little care {\bf [[To come
%   later!]]}. 
% Now, the counit is not only an equivalence for $Z=DEG_+$, but also 
% for twists. Indeed,  it is an equivalence for $Z=DEG_+ \sm W_+$ since
% $DEG_+ \sm W_+ \simeq G_+\sm_N DEG_+$ and hence for any 
% module $DEG_+ \sm \sigma$, where $\sigma$ is the inflation of the 
% free $W$-spectrum corresponding to the simple $\Q W$-module $\sigma$. 
% Taking sums, we see that $\sigma $ can be arbitrary and so the counit
% is an equivalence for $Z=G_+$ as required. 
\end{proof}
\section{From topology to algebra}
\label{sec:topalg}

We now have a commutative ring $W$-spectrum $\DBNp$, and by
\cite[1.2]{s-alg}, since we are working over the rationals, this
corresponds to a commutative monoid in the category of differential 
graded  $\Q W$-modules.  We write $\CBN$ for
this DGA since its cohomology is
$\HBN$.   Furthermore, we have Quillen equivalences between 
the categories of modules by \cite[2.15]{s-alg} and between their cellularizations by Proposition \ref{prop.cell} 

$$\cellmodcatW{\DBNp} \\
 \simeq 
\mbox{cell-$\CBN$-mod-$\Q[W]$-mod}.$$

Finally, for convenience we re-express this category of modules. 
Indeed, the category of $\CBN$-modules in $\Q W$-modules is the 
same as the category of $\CBNW$-modules in $\Q$-modules, where $\CBNW$
is the twisted group ring, so that we have an isomorphism. 

$$\mbox{cell-$\CBN$-mod-$ \Q[W]$-mod} \iso
\cellmodcatQW{\CBN}.$$

\section{Formality}
\label{sec:formality}

Next we replace $\CBN$ by its homology. To start with, a classical
result of Borel %and Samelson?
states $\HBN$ is a
polynomial algebra on even degree generators. Furthermore, if we
regard  it as a $W$-module, it is a symmetric algebra on the finite
dimensional, evenly graded $\Q W$-submodule $V=Q\HBN$. We will argue that 
there is  a copy of $V$ inside the cycles of $\CBN$. This gives a
chain map $V\lra Z\CBN \lra \CBN$ of $\Q W$-modules. 
Since $\CBN$ is commutative, the universal property of the symmetric
algebra gives a map
$$\HBN =\symm (V) \lra Z\CBN \lra \CBN$$
of differential graded $\Q W$-algebras, and it is a homology isomorphism by construction. 
This then gives a Quillen equivalence
$$\modcat{\HBNW}\simeq \modcat{\CBNW}$$
and hence also an equivalence of cellularized categories by Proposition~\ref{prop.cell}, 
where the generating cell $\Q W$ is characterized by its homology by Corollary \ref{cor:cellisunique}. 

To construct the map we work with increasing codegrees. Since $V$ is
positively cograded, we can start with the zero map in degree 0. 
When we reach codegree $n$ we have an epimorphism  
$$Z^n\CBN \lra H^n(\tilde{B}N)\lra Q^n \HBN =V^n, $$
of $\Q W$-modules. By Maschke's theorem this splits to give the 
required $\Q W$-map $V^n \lra Z^n \CBN$. Since $V$ is concentrated in
finitely many degrees, this process will be complete in finitely many
steps. 

\section{Change of algebraic models. }
\label{sec:algmodels}

The last equivalence  changes from a model with underlying category of
DG $\HBNW$-modules (and cellular equivalences as weak equivalences)
to a model with underlying category the DG torsion-$\HBNW$-modules
(and homology isomorphisms as weak equivalences). 

In fact the previous Quillen equivalence leaves us with the $\Q
W$-cellularization of the projective model structure on $\HBNW$-modules; see \cite[2.9]{s-alg} or \cite[Section 7]{johnson-morita}.  For the next step, we need the
injective model structure on $\HBNW$-modules with weak equivalences the homology isomorphisms and cofibrations the monomorphisms; see~\cite[Section 8.C]{gfreeq}) or more generally, \cite[3.6]{hess-shipley}.   Using 
the identity functors there is a Quillen equivalence between the $\Q
W$-cellularizations of the projective and injective model structures
on $\HBNW$-modules by the Cellularization Principle~\ref{prop.cell}. Now if we let $\fm$ denote the maximal ideal of $\HBN$,  the $\fm$-power torsion functor
$$\Gamma_{\fm}M:=\{ x\in M \st \fm^sx=0 \mbox{ for } s>> 0\}$$
is right adjoint to the inclusion of the torsion modules: 
$$\adjunction{i}
{\torsmodcat{\HBNW}}
{\modcat{\HBNW}_{inj}}
{\Gamma_{\fm}}. $$
We next verify that this adjunction induces a Quillen equivalence between 
the $\Q W$-cellularized
injective model category and the injective model structure on torsion modules from~\cite[8.6]{gfreeq}
with weak equivalences the homology isomorphisms and cofibrations the monomorphisms. 

First note that the inclusion of the torsion modules into the
injective model structure $\modcat{\HBNW}_{inj}$ (before cellularization)
is a left Quillen functor since it preserves homology isomorphisms and
monomorphisms.  We show in Corollary \ref{cor:cellisunique} that the generating
cell $\Q W$ is characterized by its homology, so we see that the $(i, \Gamma)$ 
adjunction preserves $\Q W$ up to equivalence.   Hence, by the
Cellularization Principle~\ref{prop.cell}, there is an induced Quillen equivalence between the associated $\Q W$ cellularized model structures.   
$$  {\mbox{cell-}{\torsmodcat{\HBNW}}} \simeq
{\cellmodcat{\HBNW}_{inj}}$$
The cellular weak equivalences detected by $\Hom (\Q W, \cdot)$ are
precisely the homology isomorphisms on torsion modules, so the
cellularized model structure on the torsion modules agrees with the
original injective model structure.    Thus, we have direct Quillen equivalences

$${\torsmodcat{\HBNW}} \simeq
{\cellmodcat{\HBNW}_{inj}} \simeq {\cellmodcat{\HBNW}_{proj}}.$$

\section{The Adams spectral sequence.}
\label{sec:ASS}
The homotopy groups $\pi^N_*$  may be used as the basis of an 
Adams spectral sequence for calculating maps between free rational
$G$-spectra. If $X$ is a $G$-space then $W$ acts on $\pi^N_*(X)$, and 
if $X$ is $N$-free  then $\HBN$ acts on $\pi^N_*(X)=H_*(X/N)$ by cap
product; 
these structures interact to give the structure of an $\HBNW$-module. 
Finally, since homotopy elements are supported on finite subspectra 
(whose homotopy is bounded below),  the module is a torsion module. 

\begin{thm} 
For any free $G$-spectra $X$ and $Y$ there is a natural
Adams spectral sequence
$$\Ext_{\HBNW}^{*,*}(\pi^N_*(X) , \pi^N_*(Y))\Rightarrow [X,Y]^{G}_*.$$
It is a finite spectral sequence concentrated in rows $0$ to $r$ 
and strongly convergent for all $X$ and $Y$. 
\end{thm}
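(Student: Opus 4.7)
The plan is to translate the computation to the algebraic model via Theorem~\ref{thm:main} and then produce a standard hyperext spectral sequence there. Under the chain of Quillen equivalences, a free $G$-spectrum $X$ corresponds to a DG torsion $\HBNW$-module $\tilde X$. Tracking the generating cell through all five steps, and in particular noting that $\pi^N_*(G_+) \cong \Sigma^d \Q W$ matches the homology of the algebraic generator, shows that $H_*(\tilde X) \cong \pi^N_*(X)$ as $\HBNW$-modules, where on the topological side $\HBN$ acts by cap product and $W$ acts through the residual equivariance on $N$-fixed points. The torsion property of $\pi^N_*(X)$ is the one already recorded just before the theorem statement. The theorem then reduces to producing a spectral sequence
\[
E_2^{s,t} = \Ext^{s,t}_{\HBNW}(H_*\tilde X, H_*\tilde Y) \Rightarrow [\tilde X, \tilde Y]_{t-s}
\]
in the derived category $D(\torsmodcat{\HBNW})$, together with the bound on rows.

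The existence of such a spectral sequence is standard. The strategy is to fibrantly replace $\tilde Y$ in the injective model structure of Section~\ref{sec:algmodels} (equivalently, take a Cartan--Eilenberg injective resolution of the underlying complex), apply $\Hom_{\HBNW}(\tilde X, -)$, and filter the resulting double complex by the horizontal degree to read off the $E_2$ page in the usual way. The category $\torsmodcat{\HBNW}$ has enough injectives, since the injective hulls of the simple modules cogenerate, so the construction is available and the identification of $E_2$ is immediate.

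For the row bound, the key input is that torsion $\HBN$-modules have injective dimension at most $r$, the Krull dimension of the polynomial ring $\HBN = \Q[x_1,\ldots,x_r]$. Indeed every torsion module is supported at the unique maximal ideal $\fm$, and Matlis dualizing a free resolution of length $\leq r$ of its Matlis dual produces an injective resolution of the same length. Since $\Q[W]$ is semisimple by Maschke's theorem, one has $\Ext^s_{\HBNW}(M,N) \cong \Ext^s_{\HBN}(M,N)^W$, so the bound persists over the twisted group ring. Hence $E_2^{s,*} = 0$ for $s > r$, the spectral sequence lives in rows $0$ through $r$, and strong convergence is automatic from this horizontal bound.

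The main obstacle is the bookkeeping of the first paragraph: one must verify that the composite equivalence of Theorem~\ref{thm:main} intertwines $\pi^N_*$ with $H_*$ compatibly with both the cap-product $\HBN$-action and the residual $W$-action. Each of the five constituent equivalences induces the expected map on homotopy, with the fixed-point adjunction of Section~\ref{sec:Nfixed} and the formality map of Section~\ref{sec:formality} being the non-trivial ones; the $\HBN$-module structure on $\pi^N_*(X)$ arises from the $\DBNp$-module structure in Section~\ref{sec:Nfixed} and is preserved by formality, while the $W$-action is carried through unchanged. Assembling naturality of all these structures simultaneously is the one place where care is required.
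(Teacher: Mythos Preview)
Your approach has a circularity problem within the paper's logical structure. Theorem~\ref{thm:main} is assembled from a chain of cellularized Quillen equivalences, and at several points (see the forward references in Sections~\ref{sec:formality} and~\ref{sec:algmodels}, and the explicit remark following the Corollary in Section~\ref{sec:ASS}) the paper needs to know that the images of the generating cell under the various functors are what they should be. It does this by invoking the Corollary to the very theorem you are trying to prove: a free $G$-spectrum with $\pi^N_*\cong\Sigma^d\Q W$ must be $G_+$, and that Corollary rests on the convergence of the Adams spectral sequence. So Theorem~\ref{thm:main} is not available as an input here.

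The paper's own proof avoids this by constructing the spectral sequence directly in topology, before any model-category machinery is in place. It realizes enough injectives via the single object $EG_+\sm G/N_+$, whose $\pi^N_*$ is $\Sigma^d\HBN[W]^{\vee}$, and verifies the representability isomorphism
\[
[X,\,EG_+\sm G/N_+]^G_*\;\cong\;\Hom_{\HBNW}\bigl(\pi^N_*X,\,\pi^N_*(EG_+\sm G/N_+)\bigr)
\]
by a change-of-groups reduction to the connected case treated in~\cite{gfreeq}; this is enough to build the Adams tower and identify $E_2$. Convergence is the statement that $\pi^N_*X=0$ forces $X\simeq *$ for free $X$, proved by a short direct argument using $\widetilde{E}G/N$. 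Your hyperext spectral sequence and the injective-dimension row bound would indeed be the algebraic reflection of all this once Theorem~\ref{thm:main} is established, but in the paper's architecture the dependence runs the other way.
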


\begin{proof}
The proof is standard. First we observe that enough torsion modules
are realizable, since $\pi^N_*(EG_+\sm G/N_+)\cong \Sigma^d
\HBN [W]^{\vee}$. Next we observe that for any free $G$-spectrum $X$
$$\pi^N_*:[X, EG_+\sm G/N_+]^G_*\lra
\Hom_{\HBN [W]}(\pi^N_*(X), \pi^N_*(EG_+\sm G/N_+))$$
is an isomorphism. Indeed, we may use a change of groups isomorphism
on the domain and codomain and reduce to showing  that 
$$\pi^N_*:[X, EN_+]^N_*\lra
\Hom_{\HBN}(\pi^N_*(X), \pi^N_*(EN_+))$$
is an isomorphism. This is a special case of  \cite[6.1]{gfreeq}.

This is enough to construct the Adams spectral sequence, and identify
the $E_2$-term. For convergence, we need only show that if $X$ is
$G$-free and $\pi_*^N(X)=0$ then $X\simeq *$. By \cite[6.1]{gfreeq} we
know that $\pi^N_*(X)=0$ implies that $X$ is $N$-contractible, or equivalently that $X\sm
G/N_+\simeq *$. It follows that $X\sm EG/N_+\simeq *$ and hence
we have equivalences 
$$X=X\sm S^0 \simeq  X\sm \widetilde{E}G/N\simeq *.$$
The first equivalence is because $X\sm EG/N_+\simeq *$ and the 
second is because $X$ is $G$-free and 
$\widetilde{E}G/N$ is non-equivariantly contractible. 
\end{proof}

Apart from giving a calculational tool, this result makes plausible the main theorem of the
present paper. Nonetheless, it appears that the only way we explicitly use the
Adams spectral sequence  is in the fact that cells are characterized by their
homology. 

\begin{cor} 
\label{cor:cellisunique}
If $X$ is a free $G$-spectrum with $\pi^N_*(X)\cong
\pi^N_*(G_+)=\Sigma^d \Q W$ then 
$X\simeq G_+$.
\end{cor}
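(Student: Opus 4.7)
The plan is to realise the hypothesised isomorphism by a $G$-map $f\colon G_+ \to X$ and then apply the vanishing statement established during the proof of the preceding theorem (a free $G$-spectrum with $\pi^N_* = 0$ is contractible) to the cofibre of $f$.

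I would apply the Adams spectral sequence of the theorem above to the pair $(G_+, X)$:
$$E_2^{s,t} = \Ext^{s,t}_{\HBNW}(\Sigma^d \Q W, \Sigma^d \Q W) \Longrightarrow [G_+, X]^G_{t - s}.$$
The isomorphism $\phi$ hypothesised in the statement represents a class in $E_2^{0,0}$, and it suffices to show $\phi$ is a permanent cycle: strong convergence would then provide, via the edge map, a class $f \in [G_+, X]^G_0$ inducing $\phi$ on $\pi^N_*$. Using that $\HBNW$ is free over $\HBN$ and $\Q W = \HBNW \otimes_{\HBN} \Q$, change of rings yields
$$\Ext^{s,t}_{\HBNW}(\Q W, \Q W) \;\cong\; \Ext^{s,t}_{\HBN}(\Q, \Q)^{\oplus |W|},$$
and the right-hand side is the Koszul exterior algebra on the dual of $\fm/\fm^2$. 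Since every polynomial generator of $H^*(BN)$ has cohomological degree at least $2$, every nonzero class in $\Ext^s_{\HBN}(\Q,\Q)$ sits in internal degree at least $2s$. Consequently $E_2^{r, r-1} = 0$ for all $r \geq 2$, so each differential $d_r\colon E_r^{0,0} \to E_r^{r, r-1}$ out of $\phi$ vanishes for pure degree reasons; no differentials enter $E_r^{0, 0}$ either, since $\Ext^{-r, *} = 0$.

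With such an $f$ in hand, its cofibre $C$ is a free $G$-spectrum with $\pi^N_*(C) = 0$ by the long exact sequence, and the vanishing result from the preceding theorem's proof gives $C \simeq *$, so $f$ is an equivalence. The main obstacle is the degree-based vanishing of $E_2^{r, r - 1}$, but this follows directly from the Koszul description once one observes the uniform lower bound on the cohomological degrees of the polynomial generators of $H^*(BN)$.
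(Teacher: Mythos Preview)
Your argument is correct and follows essentially the same route as the paper: both use the Adams spectral sequence and the degree gap in $\Ext_{\HBNW}^{*,*}(\Q W,\Q W)\cong(\Lambda V)[W]$ (your change-of-rings computation recovers exactly this) to see that the class in $E_2^{0,0}$ is a permanent cycle, hence realisable by a map. The only cosmetic difference is the endgame: the paper realises maps in both directions and checks the composites are $\pi^N_*$-isomorphisms, whereas you realise one map and kill the cofibre using the detection statement $\pi^N_*(C)=0\Rightarrow C\simeq *$; both conclusions rest on the same convergence input.
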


\begin{proof}
The $E_2$-term of the Adams spectral sequence for calculating maps
between $G_+$ and $X$  is 
$$\Ext^{*,*}_{\HBNW}(\Q W, \Q W)=(\Lambda V)[W]$$ with
$V=Q\HBN$ the indecomposables. A degree $-i$ submodule of $V$ gives rise to a bicodegree
$(1,-i)$ submodule of the Ext group, and so by degree, the bottom copy
of $\Q [W]$ consists of infinite cycles. It follows that the identity map in
$\pi^N_*$ lifts to a map between spectra. 
This gives maps $G_+\lra X$ and $X\lra G_+$ whose composites in either
order are isomorphisms in $\pi^N_*$. By the convergence
of  the Adams spectral sequence this is an equivalence. 
\end{proof}

In the present paper, we often need to know how our chosen cells
behave under functors between model categories. We  apply the 
corollary repeatedly to see that each cell maps to the obvious object up to 
equivalence. 

\begin{appendix}
\section{Cellularizations}

The basic reference for cellularization (or right localization) in model structures is~\cite{hh}.   Our convention throughout this paper is to refer to the cellularization of a stable model category $\M$ with respect to the set of all suspensions of an object $\{ \Sigma^{i} A\}_{i\in \Z}$ as {\em the cellularization of $\M$ with respect to $A$} and write $A$-cell-$\M$.
In this case $A$-cell-$\M$ is again a stable model category;
see~\cite[4.6]{barnes-roitzheim-localizations}.

We now recall the Cellularization Principle from
\cite{GScellprinciple}  (or \cite[Appendix A]{tnq3})
 which we use to produce Quillen equivalences of cellularized model categories.   

\begin{prop}\label{prop.cell} \cite{GScellprinciple} {\bf (The Cellularization Principle.)}
Let $\M$ and $\N$ be right proper, stable, cellular model
categories with $L: \M \to \N$ a Quillen 
adjunction with right adjoint $R$.  
Let $\underbar L$ and $\underbar R$ denote the associated derived functors. 
\begin{enumerate}
\item Let $A$ be a homotopically small object in $\M$ such that
  $\underbar L A$ is homotopically small and $A \to \underbar R \underbar L A$ is a weak equivalence.
Then $L$ and $R$
induce a Quillen equivalence between the $A$-cellularization
of $\M$ and the $\underbar L A$-cellularization of $\N$.
\[\mbox{$A$-cell-$\M$}\simeq_{Q} \mbox{$\underbar L A$-cell-$\N$}\]
\item Let $B$ be a homotopically small object in $\N$ such that
  $\underbar R B $ is homotopically small and
$\underbar L \underbar R B \to B$ is a weak equivalence.
Then $L$ and $R$
induce a Quillen equivalence between the $B$-cellularization
of $\N$ and the $\underbar R B$-cellularization of $\M$.
\[ \mbox{$\underbar R B$-cell-$\M$} \simeq_{Q} \mbox{$B$-cell-$\N$}\]
\end{enumerate} 
\end{prop}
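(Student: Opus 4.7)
The plan for part (1) is first to check that $(L,R)$ descends to a Quillen adjunction between the two right Bousfield localizations (cellularizations), and then to upgrade this to a Quillen equivalence using the compactness hypotheses together with the hypothesis that $A \to \underbar R \underbar L A$ is a weak equivalence. Part (2) follows by the mirror argument applied to $B$ and $\underbar R B$, so I focus on (1).

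In a right Bousfield localization the fibrations and fibrant objects are unchanged, while the weak equivalences are enlarged to those maps detected by $[A,-]_*$ (respectively $[\underbar L A,-]_*$). The adjunction therefore descends as soon as $R$ carries $\underbar L A$-cellular equivalences between fibrant objects to $A$-cellular equivalences, which is immediate from the adjunction isomorphism $[A,\underbar R Y]_*\cong [\underbar L A,Y]_*$.

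Next I would check that the derived unit and counit are cellular equivalences on the generating cells. At $A$, the derived unit $A\to \underbar R\underbar L A$ is a weak equivalence by hypothesis, hence a cellular equivalence. At $\underbar L A$, the triangle identity $\epsilon_{\underbar L A}\circ \underbar L(\eta_A)=1_{\underbar L A}$ in $\Ho(\N)$, combined with $\underbar L(\eta_A)$ being a weak equivalence (since $\underbar L$ preserves weak equivalences and $\eta_A$ is one), forces $\epsilon_{\underbar L A}$ to be a weak equivalence and so a cellular equivalence.

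The point requiring the most care is extending from the generators to all cellular objects. Because $\M$ and $\N$ are stable and cellular, the homotopy category of $A$-cell-$\M$ is the localizing subcategory of $\Ho(\M)$ generated by $A$, built from $A$ by (de)suspensions, triangles, retracts and arbitrary coproducts; similarly for $\underbar L A$-cell-$\N$. Compactness of $A$ and of $\underbar L A$ ensures that $[A,-]_*$ and $[\underbar L A,-]_*$ commute with coproducts, while $\underbar L$ preserves all homotopy colimits. A standard localizing-subcategory argument then shows that the class of $X$ for which the derived unit is an $A$-cellular equivalence is closed under the operations above and contains $A$, hence contains every cellular object; the counit is handled symmetrically. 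This delivers the claimed Quillen equivalence.
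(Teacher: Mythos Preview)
The paper does not prove this proposition: it is stated in the appendix with an explicit citation to \cite{GScellprinciple} (and \cite[Appendix~A]{tnq3}) and is used throughout as a black box. There is therefore no in-paper argument to compare your attempt against.

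That said, your sketch follows the standard line of argument for results of this type and is broadly correct. Two minor points are worth tightening. First, the phrase ``the counit is handled symmetrically'' is not quite accurate: the situation is not symmetric because $\underbar L$ preserves homotopy colimits while $\underbar R$ need not. The cleanest way to finish is to observe that your unit argument shows $\underbar L$ is fully faithful on the localizing subcategory generated by $A$, and since $\underbar L$ is exact, preserves coproducts, and sends $A$ to the generator $\underbar L A$, it is also essentially surjective onto the localizing subcategory generated by $\underbar L A$; the counit equivalence then follows formally. Second, one should remark that cellularization of a right proper cellular model category exists (Hirschhorn) and that the stable hypothesis ensures the cellularized models are again stable, so that the localizing-subcategory description of cellular objects is available; you implicitly use both facts.
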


\end{appendix}

\end{document}